\documentclass[11pt]{article}

\bibliographystyle{plain}

\sloppy 




\usepackage{enumerate}
\usepackage{latexsym} 
\usepackage{theorem}
\usepackage{graphics}
\usepackage{graphicx}
\usepackage{amsmath}
\usepackage{amssymb}
\usepackage[english]{babel} 
\usepackage{comment}
\usepackage{color}
\usepackage{tikz}
\usetikzlibrary{arrows}
\usepackage{ifpdf}

\newtheorem{defeng}{Definition}[section]
\newtheorem{theorem}[defeng]{Theorem}

{\theorembodyfont{\rmfamily} }
{\theorembodyfont{\rmfamily} }
{\theorembodyfont{\rmfamily} }
{\theoremstyle{break}\theorembodyfont{\rmfamily} }
{\theoremstyle{break}\theorembodyfont{\rmfamily} }

\def\d{\hbox{-}}

\newcommand{\poubelle}[1]{}

\tikzstyle{noeud}=[circle,inner sep=0, minimum size =12 pt, line width = 0.5pt, draw=black, fill=white]

\newcounter{claim}

\newenvironment{proof}[1][]%
 {\noindent {\setcounter{claim}{0}\sc proof ---
   }{#1}{}}{\hfill$\Box$\vspace{2ex}} 


\newenvironment{claim}[1][]%
{\refstepcounter{claim}\vspace{1ex}\noindent{(\it\arabic{claim}){#1}{}}\it}{\vspace{1ex}}

\newenvironment{proofclaim}[1][]%
	{\noindent {}{#1}{}}{ This proves~(\arabic{claim}).\vspace{2ex}}

\newcommand{\sm}{\setminus} 

\usepackage{ifpdf}

\ifpdf
\DeclareGraphicsRule{*}{mps}{*}{}
\fi

\title{Wheel-free planar graphs}

\author{Pierre Aboulker\thanks{Partially supported by \emph{Agence Nationale de la Recherche} under reference    \textsc{anr 10 jcjc 0204 01}.}\\
Concordia University, Montr\'eal, Canada\\ email: pierreaboulker@gmail.com
\\
\\
Maria Chudnovsky\thanks{Supported by NSF grants DMS-1001091 and IIS-1117631.}\\
Columbia University, New York, NY 10027, USA \\ e-mail: mchudnov@columbia.edu
\\
\\
Paul Seymour\thanks{Supported by ONR grant N00014-10-1-0680 and NSF grant 
DMS-1265563.}\\
Princeton University, Princeton, NJ 08544, USA \\ e-mail: pds@math.princeton.edu
\\
\\
Nicolas Trotignon\thanks{Partially supported by ANR project Stint
  under reference ANR-13-BS02-0007.  Also Labex Milyon, INRIA, Universit\'e de Lyon, Universit\'e Lyon~1.}\\
CNRS, LIP, ENS de Lyon, Lyon, France\\ e-mail: nicolas.trotignon@ens-lyon.fr}
    
\begin{document}

\maketitle

\begin{abstract}
A \emph{wheel} is a graph formed by a chordless cycle $C$ and a vertex
$u$ not in $C$ that has at least three neighbors in $C$.  We prove
that every 3-connected  planar graph that does not contain a wheel as
an induced subgraph is either a line graph or has a clique cutset. We
prove that every  planar graph that does not contain a wheel as
an induced subgraph is 3-colorable.

AMS classification: 05C75
\end{abstract}

\section{Introduction}
All graphs in this paper are finite and simple.  A graph $G$
\emph{contains} a graph $F$ if an induced subgraph of $G$ is
isomorphic to $F$.  A graph $G$ is {\em $F$-free} if $G$  does not
contain $F$.  For a set of graphs $\mathcal F$, $G$ is \emph{$\mathcal
  F$-free} if it is $F$-free for every $F \in \mathcal F$.   An
\emph{element} of a graph is a vertex or an edge.  When $S$ is a set
of elements of $G$, we denote by $G \sm S$ the graph obtained from $G$
by deleting all edges of $S$ and all vertices of $S$. 

A \emph{wheel} is a graph formed by a chordless cycle $C$ and a vertex
$u$ not in $C$ that has at least three neighbors in $C$.  Such a wheel
is denoted by $(u,C)$; $u$ is the \emph{center} of the wheel
and $C$ the \emph{rim}.  Observe that $K_4$ is a wheel (in some papers
on the same subject, $K_4$ is not considered as a wheel).  Wheels
  play an important role in the proof of several decomposition
  theorems.   Little is
known about wheel-free graphs.   The only positive result is due to Chudnovsky
(see~\cite{abChTrVu:moplex} for a proof).  It states that every non-null
wheel-free graph contains a vertex whose neighborhood is  made of
disjoint cliques with no edges between them.  No bound is known on the 
chromatic number of
wheel-free graphs.  No decomposition theorem is known for wheel-free
graphs.  However,  several classes of wheel-free
graphs were shown to have a structural description.

\begin{itemize}
\item Say that a graph is \emph{unichord-free} if it does not contain
  a cycle with a unique chord as an induced subgraph.  The class of
  \{$K_4$, unichord\}-free graphs is a subclass of wheel-free graphs
  (because every wheel contains a $K_4$ or a cycle with a unique chord
  as an induced subgraph), and unichord-free graphs have a complete
  structural description, see~\cite{nicolas.kristina:one}.
\item It is easy to see that the class of  graphs that do
  not contain a subdivision of a wheel as an induced subgraph is the
  class of graphs that do not contain a wheel or a subdivision of
  $K_4$ as induced subgraphs.  Here again, this subclass of wheel-free
  graphs has a complete structural description,
  see~\cite{nicolas:isk4}.
\item The class of graphs that do not contain a wheel as a subgraph
  does not have a complete structural description so far.  However, in
  \cite{thomassenToft:k4} (see also \cite{aboulkerHT:wheelFree}),
  several structural properties for this class are given.  
\item A \emph{propeller} is a graph formed by a chordless cycle $C$
  and a vertex $u$ not in $C$ that has at least \emph{two} neighbors
  in $C$.  So, wheels are just special propellers, and the class of
  propeller-free graphs is a subclass of wheel-free graphs.  In
  \cite{aboulkerRTV:propeller}, a structural description of
  propeller-free graphs is given. 
\end{itemize}

Interestingly, every graph that belongs to one of the four classes
described above is 3-colorable (this is shown in cited papers).
One might conjecture that every wheel-free graph is 3-colorable, but
this is false as shown by the graph represented on
Figure~\ref{fig:R} (it is wheel-free and has chromatic number~4).  
Also, the four classes have polynomial time
recognition algorithms, so one could conjecture that so does the class
of wheel-free graphs.  But it is proved in~\cite{diotTaTr:13} that it
is NP-hard to recognize them.  All this suggest that possibly, no
structural description of wheel-free graphs exists.

 \begin{figure}[hbtp]
\begin{center}
\includegraphics{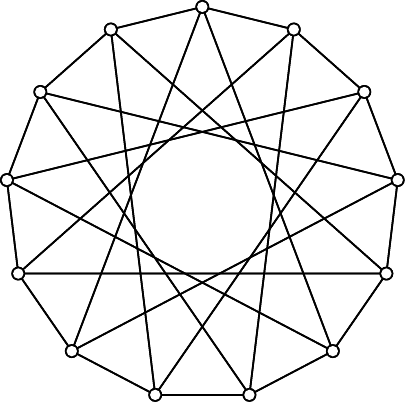}
\end{center}
\caption{The Ramsey graph $R(3, 5)$, the unique graph $G$
  satisfying $|V(G)|\geq 13$, $\alpha(G) = 4$ and $\omega(G) = 2$.\label{fig:R}}
\end{figure}

 \medskip 

 In this paper, we study planar wheel-free graphs.  A \emph{clique
   cutset} of a graph $G$ is a clique $K$ such that $G\sm K$ is
 disconnected.  When the clique has size three, it is referred to as a
 \emph{$K_3$-cutset}.  When $R$ is a graph, the line graph of $R$ is
 the graph denoted by $L(R)$ defined as follows: the vertex-set of
 $L(R)$ is $E(G)$, and two vertices $x$ and $y$ of $L(R)$ are adjacent
 if they are adjacent edges of $R$. We prove the following theorems.

\begin{theorem}
  \label{th:struct}
  If $G$ is a 3-connected wheel-free planar graph, then either $G$ is
  a line graph or $G$ has a clique cutset.
\end{theorem}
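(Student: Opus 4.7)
The plan is to prove the contrapositive: if $G$ is a $3$-connected, planar, wheel-free graph with no clique cutset, then $G$ is a line graph. Since $K_4$ is itself a wheel, $G$ is $K_4$-free; every clique of $G$ thus has size at most $3$, and combined with $3$-connectedness, the hypothesis ``no clique cutset'' is equivalent to ``no $K_3$-cutset''. I plan to apply Beineke's characterisation of line graphs by nine forbidden induced subgraphs. Several of those nine graphs contain an induced wheel or an induced $K_4$, and are therefore excluded by the hypotheses on $G$. The main work is to exclude the remaining cases, chiefly the claw $K_{1,3}$.

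Suppose for contradiction that $G$ contains an induced claw with centre $v$ and leaves $a,b,c$. I aim to build an induced cycle $C$ of $G-v$ passing through $a$, $b$, and $c$; then $(v,C)$ is an induced wheel, contradicting the hypothesis. To produce $C$, I use three inputs: $3$-connectedness of $G$ (so $G-v$ is $2$-connected), the planar embedding of $G$ (to cyclically order $a,b,c$ around $v$ and to route the paths compatibly with the embedding), and the no-$K_3$-cutset hypothesis (to preclude local obstructions). The natural attempt is to choose three shortest internally disjoint paths in $G-v$ between pairs of $\{a,b,c\}$ and close them into a cycle through the three leaves; shortness forces most chords of the candidate cycle to be absent, leaving only possible chords from one of $\{a,b,c\}$ to an internal vertex of the opposite path.

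Eliminating these residual chords is the main obstacle and technical heart of the argument. When a chord cannot be avoided by rerouting, the obstructing common neighbours, constrained by the triangle-free local structure of $G$ (a consequence of $K_4$-freeness), fit together into a triangle that separates $G$; this is a $K_3$-cutset, contradicting the hypothesis. The analogous difficulty that $a,b,c$ might not lie on any common cycle of $G-v$ is handled in the same way: the obstruction is a $2$-cut of $G-v$ which, augmented by $v$, becomes a $3$-cut of $G$ whose three vertices form a triangle, again contradicting the hypothesis. The arguments for the remaining Beineke subgraphs follow the same template and are somewhat easier, since each of them already contains a configuration closer to a wheel than a bare claw does.

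Once all nine Beineke graphs are excluded, $G$ is a line graph by Beineke's theorem, and we are done.
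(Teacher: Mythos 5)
There is a genuine gap, and it sits exactly where you wave your hands. Your reduction to forbidden subgraphs is fine in spirit (the paper uses the sharper Harary--Holzmann theorem: a \{claw, diamond\}-free graph is a line graph of a triangle-free graph, so only the claw and the diamond need to be excluded; the diamond falls to an easy argument, and every non-claw Beineke graph contains a diamond anyway, so Beineke's list is overkill but not wrong). The problem is your treatment of the claw. You assert that when an induced cycle of $G-v$ through $a,b,c$ cannot be completed, ``the obstructing common neighbours \ldots fit together into a triangle that separates $G$,'' i.e.\ a $K_3$-cutset, contradicting the hypothesis. That is false as stated: the cutsets that actually arise are $3$-cutsets $\{u,x,y\}$ in which $xy$ is an edge but $u$ is adjacent to neither $x$ nor $y$ (or to only one of them). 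These are not cliques, so they do not contradict the no-clique-cutset hypothesis, and no short local argument upgrades them to triangles. The paper isolates exactly these objects (``I-cutsets''), proves one must exist when a claw is present, and then spends roughly half of the entire proof --- a minimality argument on a nondegenerate component, a careful choice of discs $D$ and $D'$ in the embedding, and an analysis of the vertices $x_1, z, z'$ drawn inside them --- deriving a contradiction from the existence of such a cutset. Notably, the wheel that finally yields the contradiction is not centred at the claw's centre $v$ at all, so your template ``build an induced cycle through $a,b,c$ and wheel it with $v$'' is not merely incomplete but is the wrong target: that cycle genuinely may fail to exist without any clique cutset appearing.

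A secondary inaccuracy: you worry that $a,b,c$ ``might not lie on any common cycle of $G-v$.'' In a $3$-connected planar graph they always do --- the boundary of the face of $G-v$ containing $v$ is a cycle through all of $N(v)$ (the paper's Theorem on faces of almost $3$-connected graphs). The real difficulty is that this cycle has chords, and chasing those chords is what produces the non-clique $3$-cutsets above, not clique cutsets. So the proposal is not salvageable by tightening your shortest-path rerouting; it is missing the entire second stage of the argument.
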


We now give a complete description of 3-connected wheel-free planar
graphs, but we first need some terminology.  A graph is \emph{basic} if it
is the line graph of a graph $H$ such that either $H$ is $K_{2,3}$, or
$H$ can be obtained from a 3-connected cubic planar graph by
subdividing every edge exactly once.  We need to name four special graphs: the
claw, the diamond, the butterfly and the paw, that are represented in
Figure~\ref{f:cdbp}.  Basic graphs have a simple characterization
given below.

 \begin{figure}
\begin{center}
\includegraphics[height=1cm]{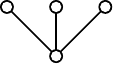}\rule{1em}{0ex}
\includegraphics[height=1cm]{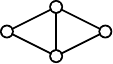}\rule{1em}{0ex}
\includegraphics[height=1cm]{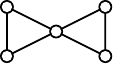}\rule{1em}{0ex}
\includegraphics[height=1cm]{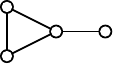}
\end{center}
\caption{The claw, the diamond, the butterfly and the paw.\label{f:cdbp}}
\end{figure}

\begin{theorem}
  \label{th:equiv}
  Let $G$ be a graph. The following statements are equivalent.
  \begin{enumerate}
  \item $G$ is basic.
  \item $G$ is a 3-connected wheel-free planar line graph.
  \item $G$ is 3-connected, planar and \{$K_4$, claw, diamond,
    butterfly\}-free.
  \end{enumerate}
\end{theorem}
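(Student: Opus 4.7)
The plan is to establish the three implications $(1)\Rightarrow(2)\Rightarrow(3)\Rightarrow(1)$.

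For $(1)\Rightarrow(2)$ I verify the properties on each basic family directly. Both are line graphs by definition. $L(K_{2,3})$ is the triangular prism, manifestly $3$-connected and planar. When $H$ is the once-subdivision of a $3$-connected cubic planar graph $H_0$, $L(H)$ is the \emph{truncation} of $H_0$: each vertex becomes a triangle on its three incident edges, and triangles of adjacent vertices are joined by the matching edge through the corresponding subdivision vertex. Truncation preserves both planarity and $3$-connectivity of a $3$-connected cubic graph. In both basic cases $G$ is cubic and $E(G)$ partitions into triangles and a perfect matching; for wheel-freeness, any chordless cycle of length $\geq 4$ in such $G$ must alternate triangle-edges with matching-edges (each vertex of the cycle contributes its unique matching-edge together with one triangle-edge), so a hypothetical wheel center $u$ of degree~$3$ would force all three neighbors of $u$ onto the rim $C$, and then the matching-partner of $u$ would contribute two triangle-edges to $C$, violating alternation. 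The case $|C|=3$ is impossible since $u$ would need three distinct matching-partners.

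For $(2)\Rightarrow(3)$, line graphs are claw-free classically, and wheel-freeness implies $K_4$-freeness since $K_4$ is a wheel. For diamond-freeness I use the Krausz decomposition $E(L(H))=\bigsqcup_{v\in V(H)}K_v$ in which each vertex of $L(H)$ lies in exactly two parts. A hypothetical induced diamond $\{a,b,c,d\}$ with $cd\notin E$ forces, by case-analyzing how the two triangles $abc$ and $abd$ are realized in the partition, either a single $K_v$ of size four (contradicting $K_4$-freeness) or a configuration in $H$ consisting of a triangle $xyz$ with a pendant edge attached at (say) $y$; $K_4$-freeness in $L(H)$ then pins $\deg_H(y)=3$, and $3$-connectivity of $L(H)$ (extending the local configuration via internally-disjoint paths) combined with planarity (to prevent unwanted chords) yields a chordless cycle through three neighbors of a fixed vertex, producing a wheel and a contradiction. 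Butterfly-freeness is handled by a parallel Krausz case-analysis.

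For $(3)\Rightarrow(1)$, assume $G$ is $3$-connected, planar, and $\{K_4,\textrm{claw},\textrm{diamond},\textrm{butterfly}\}$-free. Claw- and diamond-freeness together yield an edge-partition of $G$ into maximal cliques with each vertex in at most two cliques; $K_4$-freeness restricts each clique to a triangle or an edge; butterfly-freeness forces each vertex to lie in at most one triangle. Combined with $\delta(G)\geq 3$, each vertex lies in exactly one triangle plus one extra edge, so $G$ is cubic with $E(G)$ partitioned into triangles and a perfect matching. Contracting each triangle to a point gives a cubic planar multigraph $H_0$ whose once-subdivision $H$ satisfies $G=L(H)$. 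If $H_0$ is simple then $3$-connectivity of $G$ lifts to $3$-connectivity of $H_0$ (a small vertex cut of $H_0$ yields a small vertex cut of $G$ by selecting the appropriate vertices of each cut-triangle), placing us in basic case~2. Otherwise, any two parallel edges of $H_0$ produce a chordless $4$-cycle in $G$ whose only connection to the rest of $G$ goes through the two ``third'' vertices of the two corresponding triangles, making those two vertices a $2$-cut of $G$ unless $H_0$ consists of two vertices joined by three parallel edges---in which case $H=K_{2,3}$ and we are in basic case~1. The main obstacle is the diamond- and butterfly-freeness in $(2)\Rightarrow(3)$, where extracting an explicit wheel from a local configuration in a $3$-connected planar line graph needs the Krausz structure, $3$-connectivity, and planarity to be combined carefully; the multigraph case analysis in $(3)\Rightarrow(1)$ is more routine once the triangle-plus-matching partition of $G$ is in hand.
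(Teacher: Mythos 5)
Your overall architecture is the paper's (the cycle $1\Rightarrow 2\Rightarrow 3\Rightarrow 1$), and two of the three legs are essentially sound. For $(1)\Rightarrow(2)$ you use the truncation picture and the partition of $E(G)$ into triangles plus a perfect matching; this is a correct, somewhat more structural alternative to the paper's route, which gets planarity from Sedl\'a\v cek's theorem and chases a chord directly in a hypothetical rim. For $(3)\Rightarrow(1)$ you rederive by hand (from claw-, diamond-, butterfly-, $K_4$-freeness and $\delta\geq 3$) the triangle-plus-matching partition and contract triangles, handling parallel edges via the $2$-cut/$K_{2,3}$ dichotomy; the paper instead cites Harary--Holzmann to write $G=L(R)$ with $R$ triangle-free and analyzes branches of $R$. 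These are morally the same argument, and your version is self-contained. (Both your write-up and the paper's are equally terse on lifting $3$-connectivity from $G$ to the cubic graph; the clean way is the paper's: a $2$-vertex-cut of a cubic graph yields a $2$-edge-cut, hence two vertices of the line graph whose deletion disconnects it.)

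The genuine gap is in $(2)\Rightarrow(3)$, exactly at the point you yourself call ``the main obstacle'': you never actually exhibit the wheel arising from a diamond or a butterfly, you only assert that the Krausz structure, $3$-connectivity and planarity can be ``combined carefully'' to produce one. The missing idea is short and does not use planarity at all. Write $G=L(R)$. An induced diamond in $G$ forces a paw in $R$: a triangle $xyz$ with a pendant edge $xt$, and $K_4$-freeness pins $\deg_R(x)=3$. Since $G$ is $3$-connected, the vertex $xt$ of $G$ is not a cutvertex, so $R$ minus the edge $xt$ is connected; take a minimal path $P$ of $R$ from $t$ to $\{y,z\}$ avoiding the edge $xt$, say ending at $y$ (minimality forces $P$ to avoid $x$ and $z$). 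Then $E(P)\cup\{tx,xz,zy\}$ is the edge set of a cycle of $R$, and the line graph of a cycle of a simple graph is \emph{automatically} a chordless cycle of $G$ --- non-consecutive edges of a cycle are vertex-disjoint --- which is why your worry about ``unwanted chords'' (and hence your appeal to planarity) is unnecessary. The vertex $xy$ of $G$ is off that chordless cycle and adjacent to $tx$, $xz$, $zy$ and the last edge of $P$, so $G$ contains a wheel. The butterfly case is parallel: its degree-four vertex is an edge $xy$ of $R$ with $\deg_R(x)=\deg_R(y)=3$; $3$-connectivity of $G\setminus\{xy\}$ yields a cycle of $R$ through $x$ and $y$ avoiding the edge $xy$, whose line graph is a chordless cycle on which the vertex $xy$ has four neighbours. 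Without some such explicit construction, your $(2)\Rightarrow(3)$ is a plan rather than a proof.
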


With Theorems~\ref{th:struct} and~\ref{th:equiv}, we may easily prove
the complete description of 3-connected wheel-free planar graphs. By
the Jordan curve theorem, a simple closed curve $C$ in the plane
partitions its complement into a bounded open set and an unbounded
open set.  They are respectively the \emph{interior} and the
\emph{exterior} of $C$.

\begin{theorem}
\label{complete}
The class $\cal C$ of 3-connected wheel-free planar graphs is the
class of graphs that can be constructed as follows: start with basic
graphs and repeatedly glue previously constructed graphs along cliques
of size three that are also face boundaries.
\end{theorem}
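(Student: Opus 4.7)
The statement is an if-and-only-if and I would prove each direction by induction.

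\textbf{Soundness: every constructed graph lies in $\mathcal{C}$.} Induct on the number of gluing operations. Basic graphs are in $\mathcal{C}$ by Theorem~\ref{th:equiv}. For the inductive step suppose $G$ is obtained by gluing $G_1, G_2 \in \mathcal{C}$ along a common face-triangle $K = \{x,y,z\}$. Planarity comes from embedding each $G_i$ with $K$ as outer face and identifying. 3-connectedness is a four-case analysis (depending on where a hypothetical 2-cut $\{u,v\}$ lies with respect to $V(G_1) \sm K$, $V(G_2) \sm K$, and $K$), each case dispatched by the 3-connectedness of $G_1$ and $G_2$ together with connectedness of the triangle $K$. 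For wheel-freeness, suppose $(u, C)$ were a wheel of $G$ with $C$ chordless. Any maximal sub-path of $C$ inside $V(G_2) \sm K$ is flanked on $C$ by two distinct vertices $p, q \in K$; since $K$ is a triangle, $pq$ is an edge and must either lie on $C$ (forcing $C \subseteq V(G_2)$) or be a chord of $C$ (forbidden). By symmetry $C$ lies entirely in $V(G_1)$ or in $V(G_2)$, and together with $u$ this produces a wheel in $G_1$ or $G_2$, contradicting the inductive hypothesis.

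\textbf{Completeness: every $G \in \mathcal{C}$ is constructible.} Induct on $|V(G)|$. If $G$ is a line graph, Theorem~\ref{th:equiv} gives that $G$ is basic. Otherwise Theorem~\ref{th:struct} supplies a clique cutset. Since $K_4$ is a wheel, $G$ is $K_4$-free, so every clique has size at most three; combined with 3-connectedness this forces the cutset to have size exactly three. Call it $K = \{x, y, z\}$.

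The heart of the argument is the following lemma: \emph{the components of $G \sm K$ are exactly two, one on each side of $K$'s triangle in the planar embedding of $G$.} Suppose for contradiction that two components $C_1, C_2$ both lie interior to the Jordan curve formed by $K$. Each of $x, y, z$ has a neighbor in $C_1$ (else $G$ would admit a 2-cut), and with this the induced subgraph $H := G[K \cup C_1]$ is 3-connected by the same case analysis used in the soundness direction; it is planar and wheel-free as an induced subgraph of $G$. In $H$'s inherited embedding, the side of $K$ not containing $C_1$ is a face of $H$. Because $H$ is 3-connected and planar, every face of $H$ is bounded by a chordless cycle; and any chordless cycle through $x, y, z$ must use the edges $xy, yz, zx$ and so equal $K$. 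Hence this is the only face of $H$ with $\{x, y, z\}$ on its boundary. But $C_2$ must sit in some face of $H$ on the $C_1$-side of $K$, and since each of $x, y, z$ has a neighbor in $C_2$, that face's boundary would have to contain $\{x, y, z\}$---a contradiction.

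Granted the lemma, write $A, B$ for the two components of $G \sm K$ and set $G_1 := G[K \cup A]$, $G_2 := G[K \cup B]$. Each $G_i$ is in $\mathcal{C}$ (3-connectedness follows from the same case analysis applied to $H$ above; wheel-freeness and planarity are inherited), and in each the empty side of $K$ is a face bounded by $K$. Each is strictly smaller than $G$, so by induction each is constructible, and gluing them along $K$ reconstructs $G$. I expect the main obstacle to be the central lemma: it crucially combines all three hypotheses at once, with $K_4$-freeness (from wheel-freeness) ensuring Theorem~\ref{th:struct}'s cutset is a triangle, 3-connectedness making face boundaries chordless, and planarity forcing $C_2$ into a face of $H$.
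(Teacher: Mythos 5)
Your proof is correct and follows essentially the same route as the paper's: soundness from Theorem~\ref{th:equiv} plus the fact that gluing along a face-bounding triangle preserves membership in $\cal C$, and completeness by induction on $|V(G)|$ using Theorems~\ref{th:struct} and~\ref{th:equiv}, with the clique cutset forced to be a triangle whose edges bound a closed curve splitting $G$ into two smaller pieces. Your ``central lemma'' (that the components of $G\sm K$ lie on opposite sides of the triangle) is a point the paper asserts without proof, and your argument for it correctly fills in that detail.
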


\begin{proof}
  By Theorem~\ref{th:equiv}, a basic graph is in $\cal C$.  Also
  gluing along cliques of size three that are also face boundaries
  preserves being in $\cal C$ (in particular, it does not create wheels,
  because wheels have no clique cutset). It follows that the
  construction only constructs graphs in $\cal C$.

  Conversely, let $G$ be a graph in $\cal C$.  We prove by induction
  on $|V(G)|$ that $G$ can be constructed as we claim.  If $G$ is a
  line graph, then Theorem~\ref{th:equiv} implies $G$ is basic.  So by
  Theorem~\ref{th:struct} we may assume that G has a clique cutset.
  Since $G$ is 3-connected, this clique must be a triangle $K$ whose
  edges form a closed curve in the plane since $G$ is planar.  So, $G$
  is obtained by gluing along $K$ the two induced subgraphs of $G$
  that are drawn respectively on the closure of the interior and on
  the closure of the exterior of $K$.  These two graphs are easily
  checked to be 3-connected because $G$ is 3-connected.  It follows by
  induction that $G$ can be constructed from previously constructed
  graphs by gluing along a triangle that is also a face boundary.
\end{proof}

A consequence of our description is the following. 

\begin{theorem}
  \label{th:col}
  Every wheel-free planar graph is 3-colorable.
\end{theorem}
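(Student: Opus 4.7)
The plan is to prove the theorem by induction on $|V(G)|$, reducing every case to the 3-connected case, which is then handled by Theorem~\ref{complete}.

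For the low-connectivity reductions, the standard cases are immediate: if $G$ is disconnected or has a cut vertex $v$, 3-color each component or block by induction and permute colors to agree at $v$. For a 2-cut $\{u,v\}$, split $G$ into the two induced subgraphs $G_1, G_2$ sharing $\{u,v\}$. If $uv\in E(G)$, then both $G_i$ are wheel-free planar and strictly smaller, so by induction they are 3-colorable, and one aligns the two colorings along the edge $uv$ by a permutation of colors. If $uv\notin E(G)$, form $G_i^+$ from $G_i$ by adjoining a new vertex $x_i$ adjacent to $u$ and $v$. Then $G_i^+$ is planar (draw $x_i$ inside a face of $G_i$ whose boundary contains both $u$ and $v$; such a face exists because in any planar embedding of $G$ the set $V_{3-i}$ sits inside a face of $G_i$ bounded through $u$ and $v$), and $G_i^+$ is wheel-free: any wheel in $G_i^+$ must contain $x_i$, and since $x_i$ has degree~$2$ it cannot be the center, so it lies on the rim between $u$ and $v$; replacing the subpath through $x_i$ by a shortest (hence chordless) $u$-to-$v$ path through $G_{3-i}$ produces a chordless cycle in $G$ with the same wheel center, contradicting wheel-freeness of~$G$. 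Each $G_i^+$ is then strictly smaller than $G$ (the degenerate case where one side reduces to a single vertex $z$ amounts to $\deg_G(z)=2$ and is handled by removing $z$ and extending the coloring), so by induction it is 3-colorable, and we align on $\{u,v\}$.

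When $G$ is 3-connected, Theorem~\ref{complete} implies that either $G$ has a triangle cutset $K$ that is also a face boundary, or $G$ is basic. In the first case, split $G$ along $K$ into two strictly smaller 3-connected wheel-free planar graphs, 3-color each by induction, and align on $K$, which necessarily uses all three colors. In the second case, Theorem~\ref{th:equiv} gives two possibilities: either $G=L(K_{2,3})$, the triangular prism, which is 3-colored directly; or $G=L(H')$, where $H$ is a 3-connected cubic planar graph and $H'$ is obtained from $H$ by subdividing every edge exactly once. In this subcase, $L(H')$ is cubic (each edge of $H'$ has one degree-$3$ endpoint from $V(H)$ and one degree-$2$ subdivision endpoint, contributing $2+1=3$ adjacencies in the line graph) and contains no $K_4$ (four pairwise-adjacent edges in a simple graph force a vertex of degree at least~$4$, which $H'$ does not have), so Brooks' theorem yields $\chi(L(H'))\le 3$.

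The step I expect to be the main obstacle is the color alignment in the 2-cut case with $uv\notin E(G)$: even though each $G_i^+$ is 3-colorable, the pairs $(c(u),c(v))$ achievable on each side, up to color permutation, need not match. Since each achievable pair-set is closed under color permutation, the only bad situation is when one of $G_1^+,G_2^+$ forces $c(u)=c(v)$ in every 3-coloring while the other forces $c(u)\ne c(v)$. To rule this out, I plan to show, using wheel-freeness and planarity, that such a configuration produces a chordless cycle in $G$ witnessing a wheel: roughly, the side forcing $c(u)\ne c(v)$ admits the addition of the edge $uv$ without creating a wheel, and combining this with the rigid structure on the other side yields the required wheel in $G$. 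Once this alignment is established, the remainder of the argument is a direct application of Theorem~\ref{complete} and Brooks' theorem.
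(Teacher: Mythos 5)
Your overall skeleton (induction on $|V(G)|$; dispose of clique cutsets and degree-2 vertices; split at a 2-cut; apply the structure theorem when 3-connected) matches the paper's, and your handling of the 3-connected case is a correct minor variant: where the paper 3-edge-colors the underlying chordless graph $H$ with $G=L(H)$ via Theorem~\ref{almostChordless}, you observe that a basic graph other than $L(K_{2,3})$ is cubic, connected and $K_4$-free and invoke Brooks. That shortcut is fine for that case in isolation.

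The genuine gap is exactly the one you flag yourself: color alignment across a 2-cut $\{u,v\}$ with $uv\notin E(G)$. Your gadget $G_i^+$ (a degree-2 vertex joined to $u$ and $v$) imposes no constraint on a 3-coloring, so it cannot certify that both sides can agree on whether $c(u)=c(v)$; and your sketched repair --- that ``the side forcing $c(u)\ne c(v)$ admits the addition of the edge $uv$ without creating a wheel'' --- is unjustified and in general false: adding an edge across a 2-cut of a wheel-free graph can create a wheel, which is precisely why the paper, after adding the edge $ab$ to the small side, immediately subdivides it (and remarks that the unsubdivided graph ``might not be wheel-free''). The paper's resolution of this step is the technical core of the whole proof and rests on two ingredients absent from your proposal: (i) the structure theorem is proved in the \emph{almost} 3-connected form (Theorem~\ref{claw}), so that the subdivided graph $G_C$ is still recognized as the line graph of a chordless subcubic graph $H$ --- and to make $G_C$ almost 3-connected one must choose the 2-cut so as to minimize the small component, a choice you do not make; and (ii) Theorem~\ref{almostChordless} on 3-edge-colorability of \emph{almost chordless} subcubic graphs, applied once to $H/m$ and once to the graph obtained from $H$ by replacing the subdivided edge by a genuine edge $a_1b_1$ (which may be a chord of a cycle, hence only ``almost'' chordless), producing 3-edge-colorings of $H$ with $\pi(a)\ne\pi(b)$ and with $\pi(a)=\pi(b)$ respectively. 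This shows the small side admits 3-colorings realizing \emph{both} patterns on $\{a,b\}$, after which gluing to the inductively colored large side is immediate. Until you supply an argument of comparable strength producing both colorings on one side (or otherwise rule out the mismatched case), the proof is incomplete; note also that Brooks' theorem, unlike the edge-coloring machinery, gives you no control over the pair $(c(u),c(v))$, so it cannot be leveraged to close this gap.
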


We have no conjecture (and no theorem) about the structure of
wheel-free planar graphs in general (possibly not 3-connected).  In
Figure~\ref{f:2c} three wheel-free planar graphs of connectivity~2
are represented.  It can be checked that they belong to none
of the four classes described above (each of them contains a cycle
with a unique chord, an induced subdivision of $K_4$, a wheel as a
subgraph and a propeller).  So we do not understand them.  We leave
the description of the most general wheel-free planar graph as an open
question.

\begin{figure}
 \hfill \includegraphics{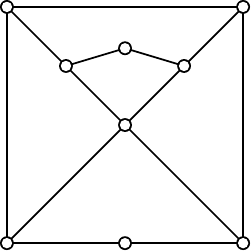}\hfill \includegraphics{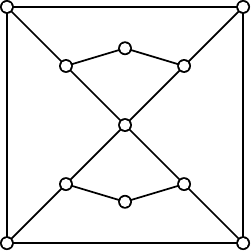}\hfill \includegraphics{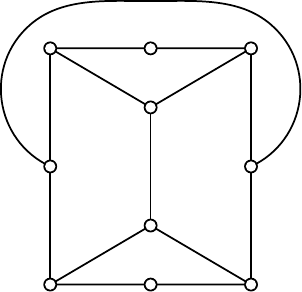}
  \hfill
\caption{Some wheel-free planar graphs\label{f:2c}}
\end{figure}

\medskip

Section~\ref{sec:struct} gives the proof of Theorem~\ref{th:struct},
and in fact of a slight generalization that we need in Section~\ref{coloration}.
 In
Section~\ref{sec:equiv}, we prove Theorem~\ref{th:equiv}. 
Theorem~\ref{th:col} is proved in Section~\ref{coloration}.

\subsection*{Notation, definitions and preliminaries}

We use notation and classical results from~\cite{diestel:graph}.  Let
$G$ be a graph, $X \subseteq V(G)$ and $u\in V(G)$.  We denote by
$G[X]$ the subgraph of $G$ induced on $X$,  by $N(u)$ the
set of neighbors of $u$, and by $N(X)$ the set of vertices of $V(G) \sm X$
adjacent to at least one vertex of $X$; and we define $N_X(u)=N(u) \cap
X$.  We sometimes write $G\sm u$ instead of $G\sm \{u\}$.   When $e$
is an edge of $G$, we denote by $G/e$ the graph obtained from $G$ by
contracting~$e$.

A {\em path} $P$ is a graph with $k\ge 1$ vertices that can be numbered $p_1,\ldots, p_k$, and with $k-1$ edges
$p_ip_{i+1}$ for $1\le i<k$.
The vertices $p_1$ and $p_k$ are the {\em end-vertices} of $P$,
and $\{p_2,\ldots, p_{k-1}\}$ is the {\em interior} of $P$. We also say that $P$ 
is a {\em $p_1p_k$-path}.  If $P,Q$ are paths, disjoint except that they have one end-vertex $v$ in common, then their
union is a path and we often denote it by $P\d v\d Q$. If $a,b$ are vertices of a path $P$, we denote
the subpath of $P$ with end-vertices $a,b$ by $a\d P\d b$.

A {\em cycle} $C$ is a graph with $k\ge 3$ vertices that can be numbered $p_1,\ldots,p_k$, and with $k$ edges
$p_ip_{i+1}$ for $1\le i\le k$ (where $p_{k+1} = p_1$).

Let $Q$ be a path or a cycle in a graph $G$.  The {\em length} of $Q$ is the number
of its edges.  An edge $e=xy$ of $G$ is a {\em chord} of $Q$ if $x,y\in
V(Q)$, but $xy$ is not an edge of $Q$.  A chord is \emph{short} if its
ends are joined by a two-edge path in $Q$.

  We need the following.

\begin{theorem}[Harary and Holzmann~\cite{harary.holzmann:lgbip}]\label{harary} 
  A graph is the line graph of a triangle-free graph if and only if it is
  \{diamond, claw\}-free.
\end{theorem}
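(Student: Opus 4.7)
The plan is to handle the two directions separately by exploiting the structure of maximal cliques in $G$. For the forward direction, suppose $G=L(H)$ with $H$ triangle-free. A claw with center $e$ and leaves $f_1,f_2,f_3$ would force each $f_i$ to share an endpoint with $e$; since $e$ has only two endpoints, two of the $f_i$ share the same endpoint and are thus adjacent in $L(H)$, a contradiction. For a diamond, I label its two non-adjacent vertices $e_1,e_4$ and its two apex vertices $e_2,e_3$; since $e_1,e_4$ are vertex-disjoint in $H$ while each of $e_2,e_3$ meets both, a short case analysis on the shared endpoints shows that three of these four edges meet at a common vertex of $H$, and reading off the remaining endpoints produces a triangle in $H$, contradicting triangle-freeness.

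For the harder backward direction, assume $G$ is \{diamond, claw\}-free. I first establish two structural facts about the maximal cliques of $G$: \emph{(1)} every edge of $G$ lies in a unique maximal clique, and \emph{(2)} every vertex of $G$ lies in at most two maximal cliques. For (1), if an edge $uv$ sat in distinct maximal cliques $K_1,K_2$, I pick $w_1\in K_1\setminus K_2$ (which exists by maximality of $K_1$); maximality of $K_2$ then forces some $w_2\in K_2\setminus K_1$ with $w_1\not\sim w_2$, otherwise $K_2\cup\{w_1\}$ would still be a clique, and $\{u,v,w_1,w_2\}$ induces a diamond. For (2), given three distinct maximal cliques $K_1,K_2,K_3$ through a common vertex $v$, I choose $v_i\in K_i\setminus\{v\}$; by (1) the $v_i$ are pairwise distinct, and they are pairwise non-adjacent (otherwise the triangle $vv_iv_j$ would lie in a unique maximal clique and force $K_i=K_j$), so $\{v;v_1,v_2,v_3\}$ is an induced claw, a contradiction.

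With (1) and (2) in hand I build $H$ by the standard Krausz-type construction: the vertex set consists of one vertex per maximal clique of $G$, plus a pendant $p_v$ for each $v\in V(G)$ lying in only one maximal clique; each vertex $v\in V(G)$ becomes the edge of $H$ joining the one or two maximal cliques containing $v$ (with $p_v$ playing the role of the missing second endpoint when $v$ lies in only one). Checking $L(H)=G$ is immediate, since adjacency on either side translates into membership of a common maximal clique. For triangle-freeness of $H$, a triangle would involve three maximal-clique vertices $K_1,K_2,K_3$ with witness edges $v_{ij}\in K_i\cap K_j$, and the three $v_{ij}$ would form a triangle in $G$ that by (1) lies in a single maximal clique $K$; matching each $v_{ij}\in K_i\cap K_j\cap K$ against (2) forces some $v_{ij}$ to belong to all three of $K_1,K_2,K_3$, contradicting (2) again. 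I expect the main bookkeeping obstacle to be this triangle-freeness step, where (1) and (2) must be combined carefully; the rest reduces to a small case analysis and a transparent construction.
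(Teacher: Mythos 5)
The paper does not prove this statement at all: it is imported as a black box from Harary and Holzmann~\cite{harary.holzmann:lgbip}, so there is no in-paper argument to compare yours against. Your proof is a correct, self-contained derivation along the classical Krausz lines: the forward direction's case analysis is sound (note that the claw half needs no triangle-freeness, since every line graph is claw-free), and facts (1) and (2) about maximal cliques, together with the clique-incidence construction of $H$, do yield $L(H)=G$ with $H$ triangle-free; your final step, deducing from (1) that the triangle $v_{12}v_{13}v_{23}$ forces two of $K_1,K_2,K_3$ to coincide, closes the argument. Two small points deserve an explicit sentence. First, in (2) you should justify that each $v_i$ can be chosen with $vv_i$ an edge whose unique maximal clique (by (1)) is $K_i$; this is what makes the $v_i$ pairwise distinct and lets the triangle $vv_iv_j$ force $K_i=K_j$, and your text only gestures at it. Second, you must check that $H$ is simple: two distinct vertices $u,v$ of $G$ cannot lie in the same two maximal cliques, since then the edge $uv$ would lie in both, contradicting (1); without this, $H$ could have parallel edges and $L(H)=G$ would fail. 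With those additions the proof is complete, and it is a reasonable substitute for the citation.
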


\section{Almost 3-connected wheel-free planar graphs}
\label{sec:struct}

A graph $G$ is \emph{almost 3-connected} if either it is 3-connected
or it can be obtained from a 3-connected graph by subdividing one edge
exactly once.  For a 2-connected graph drawn in the plane, the
boundary of every face is a cycle.  We need the following consequence.

\begin{theorem}
  \label{planar}
  Let $G$ be an almost 3-connected graph drawn in the plane, and let
  $x$ be a vertex of $G$ such that all its neighbors have degree at
  least three.  Let $R$ be the face of $G \sm \{x\}$ in which $x$ is
  drawn. Then the boundary of $R$ is a cycle $C$, and $C$ goes through
  every vertex of $N(x)$.
\end{theorem}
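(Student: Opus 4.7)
The plan is to reduce to the classical fact that in a $2$-connected plane graph every face boundary is a cycle. Concretely, I first show that $G \sm x$ is $2$-connected; this immediately gives that the boundary of $R$ is a cycle $C$. Then I use the planarity of $G$ to argue that each edge $xy$ with $y \in N(x)$ forces $y$ to lie on $C$.

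For the $2$-connectivity of $G \sm x$, I split into cases according to the definition of almost $3$-connected. If $G$ is $3$-connected, then $G \sm x$ is $2$-connected by definition of connectivity. Otherwise $G$ arises from a $3$-connected graph $G'$ by subdividing some edge $uv$ at a new vertex $w$. Since every neighbor of $x$ has degree at least three in $G$ while $w$ has degree two, we have $w \notin N(x)$. If $x = w$, then $N(x)=\{u,v\}$ and $G \sm x = G' \sm uv$; this graph is $2$-connected because $G'$ has no bridge (being $3$-connected on at least four vertices) and because any cut vertex of $G' \sm uv$, combined with an appropriate endpoint of $uv$, would produce a $2$-cut in $G'$. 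If $x \neq w$, then $x \in V(G')$ and $G \sm x$ is obtained from the $2$-connected graph $G' \sm x$ by subdividing the edge $uv$, an operation that preserves $2$-connectivity.

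For the second step, fix $y \in N(x)$ and consider the drawing of the edge $xy$ in the plane. Deleting its endpoint $x$ leaves a simple arc that starts inside the face $R$ of $G \sm x$ and ends at $y$. Because $G$ is a plane graph, this arc meets no edge of $G \sm x$ and no vertex of $G \sm x$ other than $y$, so it lies entirely inside $R \cup \{y\}$. Therefore $y$ belongs to the closure of $R$, and since $y \notin R$, it lies on the boundary of $R$, which is the cycle $C$.

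The main obstacle is the careful bookkeeping in the subdivided case: one has to rule out $w \in N(x)$ using the degree hypothesis on neighbors of $x$, and one has to verify that $G' \sm uv$ remains $2$-connected even though $G$ itself may not be $3$-connected. Everything else is a routine combination of the face-boundary-is-a-cycle theorem for $2$-connected plane graphs with a Jordan-type argument about the edges leaving $x$.
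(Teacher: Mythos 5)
Your proof is correct. The paper gives no explicit proof of this theorem---it states it as a direct ``consequence'' of the fact that every face of a $2$-connected plane graph is bounded by a cycle---and your argument (showing $G \setminus \{x\}$ is $2$-connected via the case analysis on the subdivision vertex $w$, then placing each $y \in N(x)$ on the boundary of $R$ by following the arc of the edge $xy$) is exactly the verification the paper leaves to the reader.
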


In this section, we prove the theorem below, which clearly implies
Theorem~\ref{th:struct}.  We prove the stronger statement below
because we need it in the proof of Theorem~\ref{th:col}.

\begin{theorem} 
  \label{claw} 
  If $G$ is an almost 3-connected wheel-free planar graph with no
  clique cutset, then $G$ is a line graph.
\end{theorem}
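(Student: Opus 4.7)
My approach is to reduce the theorem, via Theorem~\ref{harary}, to the assertion that $G$ contains neither an induced claw nor an induced diamond. Since $K_4$ is a wheel, the wheel-freeness of $G$ already gives $K_4$-freeness; combined with \{claw, diamond\}-freeness this makes $G$ the line graph of a triangle-free graph (and in particular a line graph), as required. I therefore proceed by contradiction, assuming that $G$ contains an induced claw or an induced diamond, and aim to produce either an induced wheel in $G$ or a clique cutset, contradicting the hypotheses.

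The engine of the argument is Theorem~\ref{planar}: for any vertex $a$ all of whose neighbors have degree at least three, the face of $G\sm a$ containing $a$ is bounded by a cycle $C$ passing through every vertex of $N(a)$. Thus, whenever $|N(a)|\ge 3$ and this boundary cycle $C$ is chordless in $G$, the pair $(a,C)$ is an induced wheel, a contradiction. Consequently, the entire analysis reduces to a study of the chords of such a face boundary cycle.

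For the diamond case, I would take an induced diamond $\{a,b,c,d\}$ with edges $ab,ac,ad,bc,cd$ and non-edge $bd$, and apply Theorem~\ref{planar} to $a$ to obtain a boundary cycle $C$ through $b,c,d$; note that $bc$ and $cd$ are edges of $G\sm a$ and so either lie on $C$ or act as chords of $C$. By repeatedly splitting $C$ along a carefully chosen chord (for instance a shortest one in $C$), I would try to produce either a chordless subcycle of $C$ still containing at least three neighbors of $a$ (yielding a wheel centered at $a$) or a triangle of $G$ that separates the graph, i.e.\ a $K_3$-cutset, contradicting the no-clique-cutset hypothesis. The claw case follows the same outline: with center $a$ and pairwise non-adjacent neighbors $b_1,b_2,b_3$, the boundary cycle $C$ through $b_1,b_2,b_3$ admits chord analysis yielding either a chordless subcycle of $C$ through three of the $b_i$ (a wheel) or a small clique separating the three arcs of $C$ joining the $b_i$.

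The main obstacle is precisely this chord analysis: one must show that under planarity together with the $K_4$-freeness that comes from wheel-freeness, no arrangement of chords on $C$ can simultaneously avoid both conclusions. This will likely require an induction on $|V(C)|$ or on the number of chords of $C$, combined with a choice of chord that makes the analysis descend to a strictly smaller cycle at each step; the planar embedding plays a crucial role in locating the chord on the side of $C$ away from $a$ and in controlling how $N(a)$ distributes among the resulting subcycles. A secondary technicality is the word ``almost'' in almost 3-connected: Theorem~\ref{planar} demands that every neighbor of $a$ have degree at least three, so the at most one degree-$2$ vertex permitted in $G$ must be handled by an ad hoc argument, typically by choosing the center $a$ to avoid having this vertex as a neighbor, or by passing to the 3-connected graph of which $G$ is a single-edge subdivision and translating the conclusion back.
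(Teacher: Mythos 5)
Your reduction via Theorem~\ref{harary} is the right frame, and your diamond case can in fact be handled much more cheaply than you propose: if $\{a,b,c,d\}$ induces a diamond with non-edge $bd$, then since $\{a,c\}$ is not a cutset there is a chordless $bd$-path $P$ in $G\sm\{a,c\}$, and taking the minimal subpath of $P$ in which both $a$ and $c$ have neighbors beyond its first vertex produces an induced cycle on which the remaining vertex of $\{a,c\}$ has three neighbors, i.e.\ a wheel. No planarity or face-boundary analysis is needed there.

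The genuine gap is in the claw case, and it is not a technicality. Your proposed dichotomy --- that chord analysis of the face boundary cycle $C$ of $G\sm a$ around a claw center $a$ yields either a chordless subcycle through three neighbors of $a$ (a wheel) or a separating triangle (a clique cutset) --- is false as stated. A chord $xy$ of $C$ that is not short yields the cutset $\{a,x,y\}$ in which $xy$ is an edge but $a$ is adjacent to neither $x$ nor $y$; this is a cutset containing an edge, not a clique cutset, and it contradicts nothing. This is exactly why the paper introduces the notion of an \emph{$I$-cutset} (a $3$-cutset with at least one edge and two nondegenerate components): the face-boundary chord analysis around a claw center only delivers such a cutset (or, in the all-chords-short case, a contradiction via a ``corner'' configuration). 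The entire second half of the paper's proof --- choosing an $I$-cutset whose nondegenerate component $C_1$ is minimal, building the paths $P,Q,R$, fixing a disc $D$ bounded by $P\cup Q\cup\{xy\}$, and a delicate planarity argument about which components can be drawn inside $D$ --- is devoted to converting that $I$-cutset into a wheel or a clique cutset. Your sketch contains no substitute for this step; an induction on $|V(C)|$ or on the number of chords will not close it, because splitting $C$ along a long chord does not keep you inside a face boundary of a vertex deletion, so Theorem~\ref{planar} no longer applies to the pieces. Until you have an argument that disposes of the ``mixed'' cutset $\{a,x,y\}$ with exactly one edge, the proof is incomplete.
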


\begin{proof}
  The proof is by contradiction, so suppose that $G$ is an almost 3-connected 
  wheel-free planar graph that has no clique cutset and that is not a line
  graph.

\begin{claim} 
  \label{trianglefact}
  Let $\{a,b,c\}$ be a clique of size three in $G$, and let $P$ be a
  chordless path of $G \setminus \{b,c\}$ with one end $a$. Then at
  least one of $b,c$ has no neighbor in $V(P) \setminus \{a\}$.
\end{claim}

\begin{proofclaim}
Suppose $b,c$ both have neighbors in $V(P) \setminus \{a\}$, and $P'$ be the minimal subpath of $P$
such that $a \in V(P')$, and both $b$ and $c$ have neighbors in
$V(P') \setminus \{a\}$. We may assume
that $P'$ is from $a$ to $x$, $x$ is adjacent to $b$, and $b$ has no neighbor
in $V(P') \setminus \{a,x\}$.  Then  $a\d P'\d x\d b\d a$ is an induced cycle, say $C$.
Now since $c$ is adjacent to $a$ and $b$, and has a neighbor in 
$V(P') \setminus \{a\}$, it follows that $(c,C)$ is a wheel, a contradiction.
\end{proofclaim}

\begin{claim} 
  \label{diamond}
  $G$ is diamond-free.
\end{claim}

\begin{proofclaim}
  Suppose that $\{a, x, b, y\}$ induces a diamond of $G$, and
  $xy\notin E(G)$.  Since $\{a,b\}$ is not a cutset of $G$, there
  exists a chordless $xy$-path $P$ in $G \setminus \{a,b\}$, contrary 
  to~(\ref{trianglefact}).
\end{proofclaim}

A vertex $e$ of $G$ is a \textit{corner} if $e$ has degree two, and there 
exist four vertices $a, \, b, \, c, \, d$ such that 
$E(G[\{a,b,c,d,e\}])=\{ab, ac, bc,cd,de,eb\}$.

\begin{claim} \label{corner}
  No vertex of $G$ is a corner.
\end{claim}

\begin{proofclaim}
  Suppose that $e \in V(G)$ is a corner and let $a, \, b, \, c, \, d$
  be four vertices as in the definition.  Since $\{b,c\}$ is not a
  cutset of $G$, there exists a chordless $ad$-path $P$ in $G \sm
  \{b,c\}$.  But now the path  $a\d P\d d\d e$ contradicts (\ref{trianglefact}). 
\end{proofclaim}

\begin{claim}
  \label{c:claw}
  $G$ contains a claw. 
\end{claim}

\begin{proofclaim}
  Otherwise, by~(\ref{diamond}) and Theorem~\ref{harary}, $G$ is a
  line graph, a contradiction.
\end{proofclaim}

\bigskip

The rest of the proof is in two steps.  We first prove the
existence of a special cutset, called an ``I-cutset''
(defined below).  Then we use the I-cutset to obtain a contradiction.

\bigskip

Let $\{u,x,y\}$ be a cutset of size three of $G$.
A component of $G \setminus \{u,x,y\}$ is said to be {\em degenerate} if it has
only one vertex, or it has exactly two vertices $a,b$ and $G[\{u, x, y, a, b\}]$ has the following edge-set: $\{xy, ax, ay,
ab, bu\}$, and {\em nondegenerate} otherwise.

A cutset $\{u,x,y\}$ of size three of $G$ is an {\em $I$-cutset} if $G[\{u,x,y\}]$ has
at least one edge and $G \setminus \{u,x,y\}$ has at least two
connected components that are non-degenerate.

\begin{claim}\label{Icutset}
$G$ admits an $I$-cutset.
\end{claim}

\begin{proofclaim}
Fix a drawing of $G$ in the plane.
  By~(\ref{c:claw}), $G$ contains a claw. Let $u$ be the center of a claw.
Let $u_1', u_2,
  \ldots, u_k$ ($k \ge 3$) be the neighbors of $u$, in cyclic order around $u$,
where $u_2,\ldots, u_k$ have degree at least three. 
If $u_1'$ has degree two, let $u_1$ be its neighbor different from $u$, and
otherwise let $u_1=u_1'$.

Deleting $u$, and also deleting $u_1'$ if $u_1'$ has degree two,
yields a 2-connected graph, drawn in the plane, and therefore, the
face $R$ of this drawing in which $u$ is drawn is bounded by a cycle
$C$. By Theorem~\ref{planar} $u_1,u_2,\ldots, u_k$ all belong to $C$,
and are in order in $C$.  For $i=1, \ldots, k$, let $S_i$ be the
unique $u_iu_{i+1}$-path included in $C$ that contains none of
$u_1,\ldots, u_k$ except $u_i$ and $u_{i+1}$ (subscripts are taken
modulo $k$).

Assume that $xy$ is a chord
  of~$C$.  Vertices $x$ and $y$ edge-wise partition $C$ into two
  $xy$-paths, say $P'$ and $P''$.  Since $R$ is a face of $G \sm
  \{u\}$ or of $G\sm \{u,u_1'\}$, it follows that
$\{u,x,y\}$ is a cutset of $G$ that separates the interior of $ P'$ from
  the interior of $ P''$.  If $xy$ is not a short chord, then both these interiors 
  contain at least two vertices and therefore $\{u,x,y\}$ is an
  $I$-cutset of $G$.  So we may assume that $xy$ is
  short.  If $x,y$ both belong to
  $S_i$ for some $i$, then $\{x,y\}$ is a clique-cutset of $G$, a
  contradiction.  
Thus we may assume
that for every chord $xy$ of $C$, there exists $i \in \{1, \dots, k\}$
such that $x \in S_{i-1}$, $y \in S_i$ and both $xu_i$ and $yu_i$ are edges.

\medskip

\noindent{\bf Claim}.  $k=3$ and  $u'_1$ has degree two. 
\medskip

\noindent To prove the claim, assume by way of contradiction that $u$ has at least three neighbors of degree at least $3$.
Since $G$ is wheel-free, $C$ must have chords.  Let $xy$ be a chord, and choose $i \in
\{1, \dots, k\}$ such that $xu_i$ and $yu_i$ are edges of $C$.
Suppose first that we cannot choose $xy$ and $i$ such that $u_i$ is adjacent to $u$. Consequently
$i=1$, and $u_1'$ has degree two; moreover,
the cycle obtained from $C$ by replacing the edges $xu_1$ and $u_1y$ by $xy$ is induced. 
Since in this case $k\ge 4$, it follows that $u$ has at least three neighbors in this cycle
and so $G$ contains a wheel, a contradiction. 

We can therefore choose $xy$ and $i$ 
such that $u_i$ is adjacent to $u$. 
It follows that $u_{i+1},u_{i-1}$ are not consecutive in $C$, since
$u$ is the center of a claw.  
We claim that there are no edges between
$S_i \sm \{u_i\}$ and $S_{i-1} \sm \{u_i\}$,
except $xy$.  For suppose such an edge exists, say $ab$. 
Since $u_{i+1},u_{i-1}$ are not consecutive in $C$, it follows that $ab$ is
a chord of $C$.  Since every chord of $C$ is short, it follows that
$\{a,b\}=\{u_{i+1},u_{i-1}\}$ and that $u_{i+1}u_{i+2}$ and
$u_{i+2}u_{i-1}$ are both edges of $C$.  But now, $G[{u, u_{i+1},
  u_{i-1}, u_{i+2}}]$ is a wheel or, in case one of $i-1$, $i+1$ or
$i+2$ equals $1$ and $u'_1$ has degree $2$, $G[{u, u_{i+1}, u_{i-1},
  u_{i+2}, u'_1}]$ is a wheel.

Hence there are no edges between $S_i \sm \{u_i\}$ and $S_{i-1} \sm \{u_i\}$ except $xy$ and thus,
$$u \d u_{i-1}\d S_{i-1}\d x \d y \d S_i \d u_{i+1} \d u$$
or, in the case where $i-1=1$ and $u'_1$ exists, 
$$u \d u_1' \d  u_{1}\d S_{1} \d x \d y \d S_1 \d u_{3} \d u$$
is an induced cycle containing three neighbors of $u_i$, a contradiction. 
This proves the claim.

\medskip

Observe that the claim implies that every center of a claw in $G$ has
degree three and is adjacent to $u'_1$ since $G$ has at most one
vertex of degree two.

Let $x,y$ be the neighbors
of $u_2$ in $S_1$, $S_2$ respectively.  Note that possibly
$x=u_1$.  Observe that, since $u$ is the center of a claw, $y \neq
u_3$.  Since every center of a claw is adjacent to $u_1'$, it follows that
$u_2$ is not the center of a claw and thus $xy$ is an edge.  Now,
$$x\d y\d S_2\d u_3\d u\d u_1'\d u_1\d S_1\d x$$
must admit a chord, for otherwise
$u_2$ is the center of a wheel of $G$. 
Hence $u_1u_3$ is an edge.
Let $z$ be the neighbor of $u_3$ in $S_2$.  Since $u_3$ is not
the center of claw, $u_1z$ is an edge and thus $u_1'$ is a corner, a
contradiction to~(\ref{corner}). 
\end{proofclaim}
\bigskip

\begin{claim} 
  \label{path} 
  Let $\{u,x,y\}$ be an $I$-cutset of $G$ where $xy$ is an edge
  and let $C$ be a nondegenerate connected component of $G \sm \{u,x,y\}$.
  Then there exist $v\in \{x,y\}$ and a path $P$ of $G[C \cup \{u,x,y\}]$ from 
  $u$ to $v$, such that the vertex of $\{x,y\} \setminus \{v\}$ has
  no neighbor in $V(P) \setminus \{v\}$. 
\end{claim}

\begin{proofclaim}
   Since $G$ does not admit a clique cutset, it follows that $u$ is 
  non-adjacent to at least one of $x,y$.   If $u$  is adjacent to
  exactly one vertex among $x$ and $y$, then the claim holds.  So we
  may assume that $u$ is adjacent to neither $x$ nor~$y$. 
  
  Since $G$ is \{diamond, $K_4$\}-free, at most one
  vertex of $G$ is adjacent to both $x$ and $y$. Let $a$ be such a 
vertex, if it exists. Let $K=\{x,y,a\}$ if $a$ exists, and let
$K=\{x,y\}$ otherwise. 

Since $K$ is not a clique cutset in $G$, we deduce that  $u$ has a neighbor in 
every component of  $C \setminus K$.
Suppose first that  there is a component $C'$ of $C \setminus K$
containing  a neighbor of one of $x,y$. 
Let $P$ be a path with interior in $C'$, one of whose ends is $u$,
and the other one is in $\{x,y\}$, and subject to that as short as possible.
Then only one of $x,y$ has a neighbor in $V(P) \setminus \{x,y\}$,
and~(\ref{path}) holds. So we may assume that no such component $C'$ exists,
and thus neither of $x,y$ has neighbors in $V(C) \setminus K$.

Let $L=\{a,u\}$ if $a$ exists, and otherwise let $L=\{u\}$.
Then $L$ is a cutset in $G$ separating $C \setminus L$ from $x,y$.
Since $G$ is almost 3-connected, it follows that $L=\{a,u\}$,
and  $C \setminus L$ consists of a unique vertex of degree two, 
so $C$ is degenerate, a contradiction.
\end{proofclaim}

For every $I$-cutset $\{u,x,y\}$, some nondegenerate component $C_1$ of $G \sm \{u,x,y\}$ 
has no vertex with degree two in $G$; choose an $I$-cutset 
$\{u,x,y\}$ and $C_1$ such that $|V(C_1)|$ is minimum.
We refer to this property as the
\emph{minimality of $C_1$}.  Put $G_1=G[C_1 \cup \{u,x,y\}]$, and $G_2 = G\setminus C_1$.  Assume
without loss of generality that $xy$ is an edge, and let $C_2\ne C_1$ be another nondegenerate component.

From~(\ref{path}) and the symmetry between $x,y$, we may assume without loss of generality that there is a chordless path
$Q$ of $G_2$ from  $u$ to $x$ such that
$y$ has no neighbor
in $V(Q)\setminus \{x\}$, and in particular $u,y$ are non-adjacent. Also, since $u,y$ both have neighbors
in $C_2$, there is a chordless path $R$ of $G_2$ between $u,y$ not containing $x$.
Since $u,y$ both have neighbors in $C_1$, there is a chordless path $P$ of $G_1$ between $u,y$ not containing $x$.
Consequently the union of $P,Q$ and the edge $xy$ is a cycle $S$. Let $D$ be the disc bounded by~$S$.

Suppose that some edge of $G_1$ incident with $x$ is in the interior
of $D$, and some other such edge is in the exterior of $D$. By adding
these two edges to an appropriate path within $G[C_1]$, we obtain a
cycle $S_0$ drawn in the plane, such that the path formed by the union
of $xy$ and $P$ crosses it exactly once; and so one of $y,u$ is in the
interior of the disc bounded by $S_0$, and the other in the
exterior. But this is impossible, because $y,u$ are also joined by the
path $R$, which is included in $G_2$ and thus is disjoint from
$V(S_0)$.  We deduce that we may arrange the drawing so that every
edge of $G_1$ incident with $x$ belongs to the interior of $D$. In
addition we may arrange that the edge $xy$ is incident with the
infinite face.

Subject to this condition (and from now on with the drawing fixed),
let us choose $P$ so that $D$ is minimum.  Since $u,x,y\in V(S)$,
every component of $G\setminus V(S)$ has vertex set either a subset of~$C_1$ or disjoint from~$C_1$.  Suppose that some vertex $c$ of $C_1$
is drawn in the interior of $D$, and let $K$ be the component of
$G\setminus V(S)$ containing it. From the choice of $P$, it follows
that there do not exist two non-consecutive vertices of $P$ both with
neighbors in $K$ and, since $|N(K)|\ge 3$ (because $G$ is almost
3-connected and all vertices in $C_1$ are of degree at least three),
and $N(K)\subseteq V(P)\cup \{x\}$, we deduce that $|N(K)| = 3$, and
$N(K) = \{x,a,b\}$ say, where $a,b$ are consecutive vertices of $P$.
From the minimality of $C_1$, $\{x,a,b\}$ is not an I-cutset, and thus
$K$ is degenerate.  Hence, since $K$ has no vertex of degree $2$,
$|V(K)| = 1$, i.e.~$V(K)=\{c\}$.  Therefore $c$ has degree three, with
neighbors $x,a,b$. But then $c$ has three neighbors in $S$, and so $G$
contains a wheel, a contradiction.

Thus no vertex in $C_1$ is drawn in the interior of $D$.  So, since
all edges of $G_1$ incident with $x$ belong to the interior of $D$,
every neighbor of $x$ in $C_1$ belongs to $P$.  Since $G$ is almost
3-connected and all vertices of $C_1$ are of degree at least 3, $x$
has at least one neighbor in~$C_1$.  Since $P\cup R$ is a chordless
cycle, it follows that $x$ has at most two neighbors in $P$ (counting
$y$), and so only one neighbor in $C_1$.  Let $x_1$ be the unique
neighbor of $x$ in~$C_1$.

Since $|V(C_1)|\ge 2$, there is a vertex $x_2$ different from $x_1$ in $C_1$, and since $G$ is almost 3-connected,
there are two paths of $G$, from $x_2$ to $u,y$ respectively, vertex-disjoint except for $x_2$, and not containing $x_1$.
Consequently both these paths are paths of $G_1$, and so there is a path of $G_1$ between $u,y$, containing neither
of $x,x_1$. We may therefore choose a chordless path $P'$  of $G_1$ between $u,y$, containing neither
of $x,x_1$. It follows that the union of $P',Q$ and the edge $xy$ is a chordless cycle $S'$ say, bounding a disc $D'$
say; choose $P'$ such that $D'$ is minimal. 
Since $x_1$ is in $P$ and $xy$ is incident with the infinite face, it follows that $x_1$ is in the interior of $D'$.

Let $Z$ be the set of vertices in $C_1\setminus \{x_1\}$ that are drawn in the
interior of $D'$. We claim that every vertex in $Z$ has degree three, and is adjacent to $x_1$ and to two consecutive
vertices of $P'$. For let $c\in Z$, and let $K$ be the component of $G\setminus (V(S')\cup \{x_1\})$ that contains
$c$. From the choice of $P'$, no two non-consecutive vertices of $P'$ have neighbors in $K$, and so as before,
$N(K) = \{a,b,x_1\}$, where $a,b$ are consecutive vertices of $P'$, and $|V(K)| = 1$. It follows that every vertex in $Z$
has degree three and is adjacent to $x_1$ and to two consecutive vertices of $P'$. 

Let $x_1$ have $t$ neighbors in $P'$. Thus $x_1$ has at least $t+1$
neighbors in the chordless cycle $S'$, and consequently $t\le 1$ since
$G$ does not contain a wheel.  The degree of $x_1$ equals $|Z|+t+1$,
and since $x_1$ has degree at least three and $t\le 1$, we deduce that
$Z\ne \emptyset$, and either $t = 1$, or $t = 0$ and $|Z|>1$.  
Choose $z\in Z$, and let $z$ be adjacent to $a,b,x_1$, where $u,a,b,y$ are in
order in $P'$. 

We claim that $x_1$ is  adjacent to neither $u$ nor $y$. 
For suppose $x_1$ is adjacent to $u$ or $y$.
Since $x_1$ is the unique neighbor of $x$ in $C_1$, $\{u,x_1,y\}$ is a cutset of $G$  separating $C_1 \sm \{x_1\}$ from the rest of the graph. 
So, since it is not an $I$-cutset and since all vertices in $C_1$ have degree at least $3$, $|C_1 \sm \{x_1\}|=1$ and thus $C_1 \sm \{x_1\}=\{z\}$. 
Since $z$ has degree at least $3$, $z$ is adjacent to $u$, $y$ and $x_1$ and, since $z \notin P'$, $P'=uy$. 
Hence $G[\{u,y,z,x_1\}]$ is a diamond, a contradiction to (\ref{diamond}) or else $x_1$ is adjacent to both $u$ and $y$ and $G[\{u,y,z,x_1\}]$ is a wheel, a contradiction. 
So $x_1$ is  adjacent to neither $u$ nor $y$. 

If $x_1$ has a neighbor in $P' \sm \{u,y\}$ (a unique neighbor
because $t\leq 1$) between $u$ and $a$,
say $v$, then $z$ has three neighbors in the chordless cycle formed
by the union of $x_1v$, the subpath of $P'$ between $v$ and $y$, and
the edges $yx$ and $xx_1$. On the other hand, if $x_1$ has a neighbor in $P'$
between $b$ and $y$, say $v$, then $x_1$ has three neighbors in the
chordless cycle formed by the union of $x_1v$, the subpath of $P'$
between $v$ and $u$, the path $Q$ and the edge $xx_1$. Thus $x_1$ has no neighbor in
$P'$, and so $t=0$ and $|Z|\ge 2$.  Let $z'\in Z\setminus \{z\}$,
adjacent to $x_1,a',b'$ say, where $a',b'$ are consecutive vertices of
$P'$, and $u,a',b',y$ are in order on $P'$. From planarity,
$\{a,b\}\ne \{a',b'\}$, and so we may assume that $u,a,a',y$ are in
order on $P'$.  But then $z'$ has three neighbors in the chordless
cycle formed by the path $y\d x\d x_1\d z\d b$ and the subpath of $P'$
between $b$ and $y$, a contradiction.
\end{proof}

\section{A characterization of basic graphs}
\label{sec:equiv}

We need the following. 

\begin{theorem}[Sedla{\v c}ek~\cite{sedlacek:L}]
  \label{th:sed}
  If $H$ is a graph of maximum degree at most three, then $L(H)$ is planar
  if and only if $H$ is planar.
\end{theorem}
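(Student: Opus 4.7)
The plan is to prove the two directions separately. The forward direction is a direct construction: fix a planar embedding of $H$ and, for each edge $e$ of $H$, place a vertex $v_e$ of $L(H)$ in the interior of the drawn curve of $e$. Around each $v\in V(H)$ choose a small open disk $\Delta_v$ so that the disks are pairwise disjoint and each $\Delta_v$ meets only the edge-halves incident to $v$. Since $d(v)\le 3$, the at most three incident edges leave $v$ in some cyclic order, and inside $\Delta_v$ one can draw a noncrossing arc joining $v_e$ to $v_{e'}$ for each cyclically consecutive pair $e,e'$ of edges at $v$: when $d(v)=3$ these arcs form the triangle of $L(H)$ associated with $v$, and when $d(v)\le 2$ fewer arcs are needed. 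Taking the union over all $v$ yields a planar drawing of $L(H)$.

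For the reverse direction I prove the contrapositive: if $H$ is non-planar with $\Delta(H)\le 3$, then $L(H)$ is non-planar. By Kuratowski's theorem $H$ contains, as a subgraph, a subdivision $H'$ of $K_5$ or of $K_{3,3}$; the branch vertices of such a subdivision keep their original degrees, so $\Delta(H)\le 3$ rules out $K_5$ and forces $H'$ to be a subdivision of $K_{3,3}$. Since line-graph adjacency depends only on whether two edges share an endpoint, $L(H')$ is an induced subgraph of $L(H)$. Each subdivision vertex of $H'$ has degree two in $H'$ and its two incident edges form an edge of $L(H')$; contracting all such edges (iterating along each subdivision path) merges every maximal subdivision-path of $H'$ into a single vertex, producing $L(K_{3,3})$. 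Thus $L(K_{3,3})$ is a minor of $L(H)$.

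It remains to check that $L(K_{3,3})$ is non-planar, which I do by exhibiting a $K_5$-minor. Identify $V(L(K_{3,3}))$ with $\{1,2,3\}\times\{1,2,3\}$, where $(i,j)\sim(i',j')$ iff exactly one coordinate agrees. Then the branch sets
\[
\{(1,1)\},\ \{(1,2),(2,2)\},\ \{(1,3),(2,3),(3,3)\},\ \{(2,1)\},\ \{(3,1),(3,2)\}
\]
partition $V(L(K_{3,3}))$, are each connected (the multi-element ones each lie within a single row or column), and a direct inspection of the ten pairs shows every two are joined by an edge, giving the desired $K_5$-minor. Hence $L(K_{3,3})$, and so $L(H)$, is non-planar. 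The only delicate point I anticipate is making the local arc construction rigorous in the forward direction, which genuinely uses $\Delta(H)\le 3$; the minor argument in the reverse direction is routine once the branch sets are written down.
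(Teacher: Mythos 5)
Your proof is correct, but note that the paper does not prove this statement at all: it is imported as a known result of Sedl\'a\v{c}ek (``Some properties of interchange graphs'', 1962) and used as a black box, so there is no in-paper argument to compare against. Your two directions are the standard ones and both check out. For the forward direction, the local construction around each vertex genuinely needs $\Delta(H)\le 3$, exactly as you say: for $d(v)=3$ all three pairs of incident edges are cyclically consecutive, so the required triangle of $L(H)$ fits inside the disk $\Delta_v$ without crossings, whereas for degree $4$ one would need a plane drawing of $K_4$ with its four vertices in prescribed convex position on the disk boundary, which fails. For the reverse direction, the reduction to $L(K_{3,3})$ is clean (the degree bound correctly eliminates the $K_5$-subdivision case, $L(H')$ is indeed an induced subgraph of $L(H)$, and contracting the edges of $L(H')$ coming from subdivision vertices does yield $L(K_{3,3})$), and I verified your five branch sets in the $3\times 3$ rook's graph: they partition the vertex set, each is connected within a row or column, and all ten pairs are joined, so $L(K_{3,3})$ has a $K_5$-minor and is non-planar. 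The only part you should tighten if this were to be written out in full is the routing of the arcs to the points $v_e$, which sit in the middles of the edges rather than inside the disks $\Delta_v$; the usual fix is to run each arc alongside the relevant edge into the disk, with at most one arc on each side of each edge-half, which is consistent because an edge meets at most two others at each endpoint. You flag this yourself, and it is the standard bookkeeping for this classical fact.
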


  We now prove the following implications between the three statements
  of Theorem~\ref{th:equiv}. 
 
 \medskip{}

  \noindent{\bf($\mathbf{1 \Rightarrow 2}$)}. Suppose that $G$ is a basic graph.  From
    the definition, $G$ is a line graph of a planar graph $R$ of
    maximum degree at most~3. Moreover, it is easy to see that $G$ is 3-connected.
 By Theorem~\ref{th:sed}, $G$ is planar.
    It remains to check that is wheel-free.  If $R=K_{2, 3}$, then
    $G$ is obviously wheel-free.  Otherwise, $R$ is obtained from a
    3-connected cubic planar graph by subdividing every edge exactly
    once.  Suppose for a contradiction that $(u, C)$ is a wheel in
    $G$.  Since $G= L(R)$, $u$ is an edge of $R$, and we set $u=xy$
    where $x$ has degree $3$ and $y$ has degree~$2$.  Let $x'$ be the
    other neighbor of $y$ (so, $x'$ has degree 3 in $R$). In $R$,
    there are two edges $e$ and $f$ different from $xy$ and incident
    to $x$.  And there are two edges $e'$ and $f'$ different from
    $x'y$ and incident to $x'$.  Since $u$ (seen as a vertex of $G$)
    has degree~3, the cycle $C$ of $G$ must go through $e$, $f$ and
    $yx'$ (also seen as vertices of $G$).  But to go in and out from
    the vertex $yx'$ of $G$, the only way is through $e'$ and $f'$ that are
    adjacent.  It follows that $C$ has a chord, a contradiction.

\medskip{}
 
\noindent{\bf($\mathbf{2 \Rightarrow 3}$)}. Suppose that $G$ is a
3-connected wheel-free planar line graph, say $G= L(R)$.  Since $G$ is
a line graph, it is claw-free.  Since $G$ is wheel-free, it is
$K_4$-free.

    Suppose for a contradiction that $G$ contains a diamond.  It
    follows that $R$ contains a paw (see Figure~\ref{f:cdbp}), say a
    triangle $xyz$ and vertex $t$ adjacent to $x$ and to none of $y$
    or $z$.  Since $G$ is 3-connected, the removal of the edge $xt$ in
    $R$ keeps $R$ connected.  It follows that in $R$, there is a path
    $P$ from $t$ to $y$ or $z$, that does not use the edge $tx$.  The
    edges of $P$, together with the edges $tx$, $xy$, $yz$ and $zx$
    form a wheel in $G$, a contradiction.

    Suppose finally that $G$ contains a butterfly. The vertex of
    degree~4 in the butterfly is an edge $xy$ in $R$, and both $x$ and
    $y$ have degree at least~3 (because of the butterfly), and in fact
    exactly~3 (because $G$ contains no $K_4$). Since $G$ is
    3-connected, the removal of the vertex $xy$ of $G$ makes a 
    2-connected graph.  It follows that in $R$, there exists a cycle through
    $x$ and $y$ that does not go through the edge $xy$.  Hence, the
    edges  of this cycle form the rim of a wheel in $G$ (the center is
    the vertex $xy$ of $G$).  This is a contradiction. 

\medskip{}

  \noindent{\bf($\mathbf{3 \Rightarrow 1}$)}. By Theorem~\ref{harary}, $G$ is the line
    graph of a triangle-free graph $R$.  Since $G$ is $K_4$-free,
    every vertex of $R$ has degree at most~$3$.  In particular, since
    $G$ is planar, by Theorem~\ref{th:sed}, $R$ must be planar.  Also, if $R$ has a
    cutvertex $x$, at least one pair of edges incident to $x$ form a
    cutset (of vertices) in $G$, because $G$ has at least four
    vertices since it is 3-connected.  This is a contradiction to the
    3-connectivity of $G$.  It follows that $R$ is 2-connected.

   If two adjacent vertices of $R$ have degree~3, then $G$ contains a diamond or a butterfly, a contradiction.
    Hence, $R$ is edge-wise partitioned into its branches, where a
    \emph{branch} in a graph is a path of length at least~2, whose
    ends have degree at least~3 and whose internal vertices have
    degree~2.
   In fact, every branch of $R$ has
    length exactly~2 because a branch of length at least~3 would yield
    a vertex of degree~2 in $G$, a contradiction to its
    3-connectivity.   

    Suppose that there is a pair of vertices $x, y$ of degree 3 in $R$
    such that at least two distinct branches $P, Q$ have ends $x$ and
    $y$.  We denote by $e$ (resp.\ $f$) the edge incident to $x$
    (resp. $y$) that does not belong to $P$ or $Q$.  Now, $G\setminus
    \{e, f\}$ is disconnected (contradicting $G$ being 3-connected),
    unless $e$ and $f$ are the only edges of $R$ that do not belong to
    $P$ and $Q$.  But in this case, $R = K_{2, 3}$.  So, from here on,
    we may assume that for all pairs of vertices $x, y$ from $R$, there
    is at most one branch of $R$ with ends $x$ and $y$.

    It follows that by suppressing all vertices of degree~2 of $R$, a
    cubic graph $R'$ is obtained (\emph{suppressing} a vertex of
    degree~2 means contracting one the edges incident to it) .
    Suppose that $R'$ is not 3-connected.  This means that
    $R'\setminus \{x, y\}$ is disconnected where $x$ and $y$ are
    vertices of $R'$.  Since $R'$ is cubic, for at least one component
    $C_x$ of $R'\setminus \{x, y\}$, $x$ has a unique neighbor $x'$ in
    $C_x$.  Also, $y$ has a unique neighbor $y'$ in some component
    $C_y$.  Now, $xx'$ and $yy'$ are two edges of $R'$ whose removal
    disconnects $R'$.  These two edges are subdivided in $R$, but they
    still yield two edges whose removal disconnects $R$.  This yields
    two vertices in $G$ whose removal disconnects $G$, a contradiction
    to $G$ being 3-connected.  We proved that $R$ is obtained from a
    3-connected cubic graph (namely $R'$) by subdividing once every
    edge.

\section{Coloring wheel-free planar graphs}
\label{coloration}

A \textit{coloring} of $G$ is a function $\pi: \, V(G) \rightarrow
\mathcal C$ such that no two adjacent vertices receive the same color
$c \in \mathcal C$. If $\mathcal C=\{1, 2, \dots, k\}$, we say that
$\pi$ is a \textit{k-coloring} of $G$.  An \textit{edge-coloring} of
$G$ is a function $\pi: \, E(G) \rightarrow \mathcal C$ such that no
two adjacent edges receive the same color $c \in \mathcal C$. If
$\mathcal C=\{1, 2, \dots, k\}$, we say that $\pi$ is a
\textit{k-edge-coloring} of $G$.
Observe that an edge-coloring of a graph $H$ is also a 
coloring of $L(H)$.  

A graph $R$ is \emph{chordless} if every cycle in $R$ is chordless.  A
way to obtain a chordless graph is to take any graph and to subdivide
all edges.  It follows that basic graphs are in fact line graphs of
chordless graphs.  This is the property of basic graphs that we rely on
in this section. 

It is proved in~\cite{mft:chordless} that for all $\Delta\geq 3$ and
all chordless graphs $G$ of maximum degree $\Delta$, $G$ is
$\Delta$-edge-colorable (for $\Delta=3$, a simpler proof is given in
\cite{nicolas:isk4}).  Unfortunately, this result is not enough for
our purpose and we reprove it for $\Delta=3$ in a slightly more
general form.  A graph is \emph{almost chordless} if at most one of
its edges is the chord of a cycle.

\begin{theorem}
  \label{almostChordless}
  If $G$ is an almost chordless graph with maximum degree three, then $G$
  is 3-edge-colorable.
\end{theorem}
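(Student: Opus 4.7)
The plan is to argue by induction on $|V(G)|$, treating the theorem from~\cite{mft:chordless} (and the simpler argument from~\cite{nicolas:isk4} specific to $\Delta=3$) as a black box: every chordless graph of maximum degree three is 3-edge-colorable. If $G$ itself is chordless, that result suffices. Otherwise $G$ has a unique edge $e=uv$ that is a chord of some cycle; let $G':=G\setminus e$. Every cycle of $G'$ is a cycle of $G$ not containing $e$, and such a cycle is chord-free in $G$ since $e$ is the only chord-edge of $G$; therefore $G'$ is chordless with $\Delta(G')\le 3$, and the chordless result supplies a 3-edge-coloring $\pi$ of $G'$.

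To finish I would extend $\pi$ to $e$. Let $A$ and $B$ be the sets of colours used by $\pi$ on the edges of $G'$ incident to $u$ and to $v$ respectively; since $u$ and $v$ have degree at most $2$ in $G'$, each of $A, B$ has at most two elements. If $A\cup B\neq\{1,2,3\}$ then any colour in $\{1,2,3\}\setminus(A\cup B)$ extends $\pi$. Otherwise, up to relabeling, $A=\{1,2\}$ and $B=\{2,3\}$; I would consider the maximal $(1,3)$-alternating path $Q$ in $G'$ starting at $v$ (note that $v$ is an endpoint of $Q$ because $v$ meets colour~$3$ but not colour~$1$). If $Q$ does not terminate at $u$, swapping colours~$1$ and~$3$ along $Q$ converts the colour~$3$ incident to $v$ into colour~$1$, so both $A$ and $B$ become $\{1,2\}$ and colour~$3$ is now free for $e$.

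The principal difficulty is the residual case in which $Q$ does end at $u$. Then $Q\cup\{e\}$ is a cycle $C^\star$ in $G$, and $C^\star$ is chordless because the only chord-edge of $G$ is $e$ and $e$ lies on $C^\star$; moreover $C^\star$ has odd length, since $Q$ begins with an edge of colour~$3$ at $v$, alternates, and ends with an edge of colour~$1$ at $u$, so $|E(Q)|$ is even. To handle this I would exploit the cycle $C$ of which $e$ is a chord: $C$ decomposes into two chordless $uv$-paths $P_1, P_2$ of length at least two in $G$. Using $C$ together with $C^\star$, I would attempt the symmetric Kempe swaps $(2,3)$ starting at $u$ and $(1,2)$ starting at $v$; a case analysis, leveraging almost-chordlessness, should show that at least one such swap succeeds, as otherwise the simultaneous existence of Kempe paths in several colour pairs linking $u$ to $v$ would, together with $P_1, P_2$, force either an additional chord or a configuration around $\{u,v\}$ simple enough to 3-edge-colour directly (possibly after deleting a carefully chosen edge and invoking the induction hypothesis on a strictly smaller almost-chordless subcubic graph). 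The main obstacle is precisely this Kempe-chain case analysis: verifying that the almost-chordless hypothesis rules out all simultaneous obstructions.
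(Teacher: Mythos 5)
Your reduction to the chordless case is fine as far as it goes: deleting the unique chord-edge $e=uv$ does yield a chordless subcubic graph, and the easy half of your extension argument (when the two endpoints have a common missing colour, or when the $(1,3)$-Kempe chain from $v$ misses $u$) is correct. But the proof has a genuine gap exactly where you flag it: the case where the Kempe chain from $v$ terminates at $u$ is not handled, and the repair you sketch does not work. Concretely, in the configuration $A=\{1,2\}$, $B=\{2,3\}$, the $(2,3)$-swap starting at $u$ can never succeed on its own: it makes $u$ miss colour $2$, but $v$ sees both $2$ and $3$, so $v$ is either off that chain or interior to it, and in either case $v$ still sees colour $2$ after the swap; symmetrically for the $(1,2)$-swap at $v$. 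So each candidate swap only trades one obstruction for another, and you are led into an unbounded cascade of "the next chain also ends at the other endpoint" cases. Moreover, the structural information you hope to extract --- that a $(1,3)$-chain from $v$ to $u$ together with the cycle $C$ having chord $e$ forces an extra chord --- is not true: $Q\cup\{e\}$ being a chordless odd cycle is perfectly consistent with almost-chordlessness, so nothing contradictory falls out. This Kempe-chain obstruction is precisely the hard part of edge-colouring subcubic graphs, and waving at "a case analysis leveraging almost-chordlessness" does not discharge it.

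The paper avoids Kempe chains entirely. It takes a minimum counterexample and proves structural claims: the low-degree vertices form a stable set, the graph is $2$-connected, no two disjoint edges disconnect it, and consequently the degree-three vertices induce at most one edge, which must be the unique chord $xy$; its four other neighbours $x',x'',y',y''$ are then pairwise distinct, pairwise non-adjacent, and all of degree two. In the chordless case $(X,Y)$ is a bipartition and K\H{o}nig's edge-colouring theorem finishes; otherwise one deletes $xy$, contracts the four edges $xx',xx'',yy',yy''$ to get a bipartite subcubic graph, applies K\H{o}nig, and writes down an explicit recolouring of the ten affected edges. The degree-two slack at $x',x'',y',y''$ is what makes the explicit extension possible with no chain-swapping. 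If you want to salvage your black-box approach, you would need to first establish this same local structure around the chord (which requires the minimal-counterexample claims anyway), at which point the paper's direct recolouring is simpler than any Kempe argument.
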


\begin{proof}
  Let $G$ be a counter-example with the minimum number of edges.  Let $X
  \subseteq V(G)$ be the set of vertices of degree three and $Y=V(G)
  \sm X$ the set of vertices of degree at most two.

\begin{claim}  \label{Y}
\, $Y$ is a stable set.
\end{claim}

\begin{proofclaim}
  For suppose that there exists an edge $uv$ such that $u$ and $v$
  belong to $Y$.  From the minimality of $G$ 
there exists a 3-edge-coloring of $G
  \sm uv$.  Since $u,v\in Y$,
it is easy to extend the 3-edge-coloring of $G \sm uv$ to a
  3-edge-coloring of $G$, a contradiction.
\end{proofclaim}

\begin{claim}
  \label{c:2con}
  \, $G$ is 2-connected.
\end{claim}

\begin{proofclaim}
  Otherwise $G$ has a cut-vertex $v$, so $V(G)\sm \{v\}$ partitions into two
  nonempty sets of vertices $C_1$ and $C_2$ with no edges between them.  A
  3-edge-coloring of $G$ can be obtained easily from 3-edge-colorings
  of $G[C_1\cup \{v\}]$ and $G[C_2 \cup \{v\}]$, a contradiction.
\end{proofclaim}

\begin{claim}\label{edgeCutset}
\, If $e,f$ are disjoint edges of $G$, then $G\setminus \{e,f\}$ is connected. 
\end{claim}

\begin{proofclaim}
  Suppose there exists two disjoint edges $u_1u_2$ and $v_1v_2$ such
that $G\setminus \{u_1u_2,v_1v_2\}$ is not connected; then $G\sm
\{u_1u_2, v_1v_2\}$  partitions into two nonempty sets of vertices
  $C_1$ and $C_2$ with no edges between them.  By~(\ref{c:2con}) we
  may assume that $\{u_1,v_1\} \subseteq C_1$ and $\{u_2,v_2\}
  \subseteq C_2$.  For $i=1,2$, let $G_i$ be the graph obtained from
  $G[C_i]$ by adding a vertex $m_i$ adjacent to both $u_i$ and $v_i$.
  If $G_1$ contains a cycle $C$ with a chord $ab$, then $ab$ is a
  chord of a cycle of $G$ (this is clear when $C$ does not contain
  $m_1$, and when $C$ contains $m_1$, the cycle is obtained by
  replacing $m_1$ by a $u_2v_2$-path included in $C_2$
  that exists by~(\ref{c:2con})).  It follows that $G_1$ and
  symmetrically $G_2$ are almost chordless.  Moreover they both
  clearly have maximum degree at most three and, by~(\ref{Y}), both $C_1$ and
  $C_2$ contain vertices of degree  three, so $G_1$ and $G_2$ have fewer
  edges than $G$.  Therefore $G_1$ and $G_2$ admit  3-edge-colorings.
  
  Let $\pi_1$ and $\pi_2$ be 3-edge-colorings of respectively $G_1$
  and $G_2$.  We may assume without loss of generality that
  $\pi_1(u_1m_1)=\pi_2(u_2m_2)=1$ and $\pi_1(v_1m_1)=\pi_2(v_2m_2)=2$.
  Now, the following coloring $\pi$ is a 3-edge-coloring of $G$:
  $\pi(u_1v_1)=1$, $\pi(u_2v_2)=2$, $\pi(e)=\pi_1(e)$ if $e \in
  E(G_1)$ and $\pi(e)=\pi_2(e)$ if $e \in E(G_2)$, a contradiction.
\end{proofclaim}

\begin{claim} \label{X}
\, $G[X]$ has at most one edge, and if it has one, it is a chord of a
cycle of $G$.  
\end{claim}

\begin{proofclaim}
  Suppose that $xy$ is an edge of $G[X]$ such that $G \sm xy$ is not
  2-connected.  Then, there exists a vertex $w$ such that $G \sm
  \{xy,w\}$ is disconnected.  Let $C_x$ and $C_y$ be  the two 
  components of $G \sm \{xy,w\}$, where $x \in C_x$ and $y \in
  C_y$.  Since $w$ is of degree at most three, $w$ has a unique neighbor
  $w'$ in one of $C_x,C_y$, say in $C_x$.  
If $w'=x$, then $x$ is a cut-vertex of $G$ (because $|C_x|>1$ since $x$ has degree three), a contradiction to~(\ref{c:2con}).
So $w' \neq x$ and hence $xy, ww'$ are disjoint, a contradiction to (\ref{edgeCutset}).

  Therefore, for every edge $xy$ of $G[X]$, $G \sm xy$ is 2-connected.
  So, if such an edge exists, by Menger's theorem there exists a
  cycle $C$ going through both $x$ and $y$ in $G \sm xy$, and thus $xy$
  is a chord of $C$.  Since $G$ is almost chordless, there is at most
  one such edge.
\end{proofclaim}

If $G$ is chordless, then by~(\ref{Y}) and~(\ref{X}), $(X, Y)$ forms a
bipartition of $G$, so by a classical theorem of K\H onig, $G$ is
3-edge-colorable, a contradiction.  So let $xy$ be a chord of a cycle
of $G$.  Let $x'$ and $x''$ be the two neighbors of $x$ distinct from
$y$ and let $y'$ and $y''$ be the two neighbors of $y$ distinct from
$x$.  By (\ref{X}), $x'$, $x''$, $y'$ and $y''$ are all of degree 2
and by (\ref{Y}), they induce a stable set.  If
$\{x',x''\}=\{y',y''\}$, then $G$ is the diamond and thus is
3-edge-colorable.  If $|\{x',x''\} \cap \{y',y''\}|=1$, say $x'=y'$
and $x'' \neq y''$, then $xx'',yy''$ are disjoint and their deletion disconnects $G$, a
contradiction to~(\ref{edgeCutset}).  Hence $x'$, $x''$, $y'$ and
$y''$ are pairwise distinct.

Let $x_1'$ (resp.\ $x''_1$, $y'_1$, $y''_1$) be the unique neighbor of
$x'$ (resp.\ $x''$, $y_1'$, $y_1''$) distinct from $x$ (resp.\ $y$).
Let $G'$ be the graph obtained from $G$ by deleting the edge $xy$ and
contracting edges $xx'$, $xx''$, $yy'$ and $yy''$.   
We note $x$ the vertex resulting from the contraction of $xx'$ and $xx''$, and $y$ the vertex resulting from the contraction of $yy'$ and $yy''$.
Since $G'$
has maximum degree at most three and is bipartite by~(\ref{Y})
and~(\ref{X}), it follows that $G'$ has a 3-edge-coloring $\pi'$ by K\H onig's
theorem.

Assume without loss of generality that $\pi'(xx_1')=1$, $\pi'(xx_1'')=2$,
$\pi'(yy_1')=a$ and $\pi'(yy_1'')=b$ where $\{a,b\} \subseteq
\{1,2,3\}$.  Since $\{a,b\} \cap \{1,2\} \neq \emptyset$, we may
assume without loss of generality that $a=1$, so $b\neq 1$.  Let us now extend this
coloring to a 3-edge-coloring $\pi$ of $G$.  For any edge $e$ of $G$
such that its extremities are not both in
$\{x,y,x',x'',y',y'',x_1',x_1'',y_1',y_1''\}$, set $\pi(e)=\pi'(e)$.
Set $\pi'(x'x_1')=1$, $\pi'(x''x_1'')=2$, $\pi'(y'y_1')=1$ and
$\pi'(y''y_1'')=b$.  Now we can set $\pi(xx')=2$, $\pi(xx'')=1$,
$\pi(yy')=2$,  $\pi(yy'')=1$ and $\pi(xy)=3$.  So $\pi$
is a 3-edge-coloring of $G$.
\end{proof}

Note that in the next proof, we do not use planarity, except when we apply
Theorem~\ref{claw}. 

\bigskip

\noindent{\bf Proof of Theorem~\ref{th:col}\ \ }

We argue by induction on $|V(G)|$.  Suppose first that $G$ admits a
clique cutset $K$.  Let $C_1$ be the vertex set of a component of $G \sm K$
and $C_2=V(G)\sm (K \cup C_1)$.  By induction $G[C_1 \cup K]$ and $G[C_2
\cup K]$ are both 3-colorable and thus $G$ is 3-colorable.  So we may
assume that $G$ has no clique cutset.  If $G$ has a vertex
$u$ of degree two, then we can 3-color $G \sm \{u\}$ by induction and
extend the coloring to a 3-coloring of $G$.  So we may assume that
every vertex of $G$ has degree at least three.

Assume now that $G$ is 3-connected.  By Theorem~\ref{claw}, there
exists a chordless graph $H$ of maximum degree three such that $G=L(H)$.
Hence, by Theorem~\ref{almostChordless}, $H$ is 3-edge-colorable and
thus $G$ is 3-colorable.  So we may assume that the connectivity of
$G$ is two.

Let $\{a,b\} \subseteq V(G)$ be such that $G\sm \{a, b\}$ is
disconnected.  We choose $\{a, b\}$ to minimize the smallest order of a  
component of $G\sm \{a,b\}$, and let $C$ be the vertex set of this component.  If $|C|=1$, then the vertex in $C$
is of degree two in $G$, a contradiction.  So $|C| \ge 2$.
Let $G'_C$ be the graph obtained from $G[C \cup \{a,b\}]$ by adding
the edge $ab$ (that did not exist since $G$ has no clique cutset).
Let us prove that $G'_C$ is 3-connected.  Since $|C| \ge 2$ and $G'_C$ therefore has at least four vertices,
we may assume by contradiction that $G'_C$ admits a 2-cutset $\{x,y\}$.
  Let $C_1, \dots, C_k$ ($k\ge 2$) be the vertex sets of the 
components of $G'_C \sm \{x,y\}$.  Since $ab$ is an edge of $G'_C$,
$a$ and $b$ are included in $G'_C[C_i \cup \{x,y\}]$ for some $i \le
k$, say $i=2$.  Hence $\{x,y\}$ is a cutset of $G$ and $C_1$ is a
component of $G \sm \{x,y\}$ that is a proper subset of $C$,
a contradiction to the minimality of $C$.  So $G'_C$ is 3-connected. (But it might not be wheel-free.)

Let $G_C$ be the graph obtained from $G'_C$ by subdividing $ab$ once, and let
$m$ be the vertex of degree two of $G_C$.  Since $G'_C$ is
3-connected, $G_C$ is almost 3-connected.  Suppose that $G_C$ admits
a wheel $(u, R)$.  Since $G$ is wheel-free, $m$ must be a vertex of
$(u,R)$.  Since $m$ is of degree two, $m$ is in $R$, and so $a\d m\d b$ is a
subpath of $R$.  Since $G$ is 2-connected, there exists a chordless $ab$-path
$P$ in $G\sm C$.  Hence by replacing $a\d m\d b$ by $P$, we obtain a wheel in
$G$, a contradiction.  Therefore $G_C$ is an almost 3-connected  wheel-free
 planar graph.

By Theorem \ref{claw}, there exists a chordless graph
$H$ of maximum degree three such that $L(H)=G_C$.  We are now going to
prove there exist two ways to 3-edge-color $H$, one giving the same
color to $a$ and $b$ (that are edges of $H$), and the other giving
distinct colors to $a$ and $b$.  This implies that there exist two ways
to 3-color $G[C \cup \{a,b\}]$, one giving the same color to $a$ and
$b$ and the other giving distinct colors to $a$ and $b$. Since by
the inductive hypothesis there exists a 3-coloring of $G \sm C$, it follows that this
3-coloring can be extended to a 3-coloring of $G$.

We first prove that there exists a 3-edge-coloring $\pi$ of $H$ such
that $\pi(a)\ne\pi(b)$.  Observe that both ends of $m$ are of
degree two in $H$.  Hence, $H/m$ is also a chordless graph with maximum
degree at most three.  Therefore there exists a 3-edge-coloring $\pi$ of
$H /m$ and clearly $\pi$ statisfies $\pi(a) \neq \pi(b)$.  It is easy
to extend $\pi$ to a 3-edge-coloring of $H$ by giving a color distinct
from $\pi(a)$ and $\pi(b)$ to $m$.

Let us now prove that there is a 3-edge-coloring of $H$ such that
$\pi(a)=\pi(b)$.  Let $m=m_am_b$, $a=m_aa_1$ and $b=m_bb_1$.  
We claim that $a_1b_1$ is not an edge of $H$. 
For if $a_1b_1$ is an edge of $H$, then there exists a vertex $x$ in $G_C$ adjacent to both $a$ and $b$. 
Since $G_C$ is almost 3-connected and $m$ is the only vertex of degree $2$ in $G_C$, $G_C \sm \{x,m\}$ is connected, and thus there exists a path $P$ between $a$ and $b$ avoiding $x$ and $m$. 
Since $a$ and $b$ are not adjacent, $P$ is of length at least $2$. 
Naming $u$ the vertex of $P$ adjacent to $a$, $u$ is adjacent to $x$, otherwise $G_C[\{a,x,u,m\}]$ is a claw of $G_C$, contradicting the fact that $G_C$ is a line graph. 
Hence $x$ has at least three neighbors in the chordless cycle formed by the path $P$ and the edges $am$ and $bm$, a contradiction to the fact that $G_C$ is wheel-free.  
So $a_1b_1$ is not an edge of $H$.

Let $H'$
be the graph obtained from $H$ by deleting the vertices $m_a$ and $m_b$
and adding the edge $a_1b_1$.  If an edge $xy$ distinct from $a_1b_1$
is the chord of a cycle $Q$, then since it is not a chord in $H$, $Q$
must contain $a_1b_1$.  Then by replacing $a_1b_1$ by $a_1\d m_a\d m_b\d b_1$,
we deduce that $xy$ is also the chord of a cycle in $H$, a contradiction.
Hence $H'$ is almost chordless and thus, by
Theorem~\ref{almostChordless}, $H'$ admits a 3-edge-coloring $\pi'$.
Assume that $\pi'(a_1b_1)=1$.  Then setting
$\pi(a_1m_a)=\pi(b_1m_b)=1$ and $\pi(m_am_b)=2$, we obtain a 3-edge-coloring 
of $H$ satisfying $\pi(a_1m_a)=\pi(b_1m_b)$. This completes the proof of 
Theorem~\ref{th:col}. \hfill$\Box$\vspace{2ex}

\end{document}